\def\lt{\left}                
\def\rt{\right}
\newcommand{\R}{{\mathbb R}}       
\newcommand{\N}{{\mathbb N}}       %
\newcommand{\diam}{{\rm diam}}
\newcommand{\ra}{\rightarrow}
\newcommand{\supp}{{\operatorname{spt}}}
\newcommand{\ve}{{\varepsilon}}
\newcommand{\loc}{{\rm loc}}
\newcommand{\stm}{\setminus}
\newcommand{\Rd}{{\mathbb{R}^d}}
\newtheorem{theorem}{Theorem}[section]
\newtheorem{lemma}[theorem]{Lemma}
\newtheorem*{lemma*}{Lemma}
\theoremstyle{definition}
\theoremstyle{remark}
\newtheorem{rem}[theorem]{\bf Remark}
\numberwithin{equation}{section}
\newcommand{\brem}{\begin{rem}}
\newcommand{\erem}{\end{rem}}
\begin{document}

\title{A note on weak convergence of singular integrals in metric spaces}

\author{Vasilis Chousionis}
\address{Department of Mathematics \\ University of Illinois \\ 1409
  West Green St. \\ Urbana, IL 61801}
\email{vchous@math.uiuc.edu}

\author{Mariusz Urba\'nski}
\address{Department of Mathematics \\University of North Texas\\ General Academics Building 435
  \\ 1155 Union Circle \#311430 \\ Denton, TX 76203-5017}
\email{urbanski@unt.edu}

\subjclass[2010]{Primary 32A55, 30L99} 
\keywords{Singular integrals, metric spaces}


\begin{abstract}We prove that in any metric space $(X,d)$ the singular integral
operators
\begin{equation*}
T^k_{\mu,\ve}(f)(x)=\int_{X\setminus
B(x,\varepsilon )}k(x,y)f(y)d\mu (y) .
\end{equation*}
converge weakly in some dense subspaces of $L^2(\mu)$ under minimal regularity
assumptions for the measures and the kernels.
\end{abstract}
\maketitle


\section{Introduction}
A Radon measure on a metric space $(X,d)$ has $s$-growth if
there exists some constant $c_\mu$ such that $\mu(B(x,r))\leq c_\mu r^{s}$
for all $x\in X$, $r>0$. 

We say that
$k(\cdot,\cdot):X\times X\setminus\{(x,y)\in X \times X :x=y\}\rightarrow\R$ is
an $s$-dimensional kernel if there exists a
constant $c>0$ such that for all $x,y\in X$, $x\neq y$:
\begin{equation*}
\label{defcz}
\begin{split}
&|k(x,y)|  \leq c \, d(x,y)^{-s}.
\end{split}
\end{equation*}
The kernel $k$ is antisymmetric if $k(x,y)=-k(y,x)$ for all distinct $x,y \in X$.

Given a positive Radon measure $\nu$ on $X$ and an $s$-dimensional kernel $k$, we define
\begin{equation*}
T^k\nu(x) := \int k(x,y)\,d\nu(y), \qquad{x\in X \setminus\supp\nu}.
\end{equation*}
This integral may not converge when $x\in\supp\nu$. For this reason, we
consider the following $\ve$-truncated operators $T^k_\ve$, $\ve>0$:
$$T^k_\ve\nu(x) := \int_{d(x,y)>\ve} k(x,y)\,d\nu(y), \qquad{x\in X}.$$

Given a fixed positive Radon measure $\mu$ on $X$ and $f\in
L^1_{\loc}(\mu)$, we write
$$T^k_\mu f(x) := T^k(f\,\mu)(x),\qquad x\in X\setminus\supp(f\,\mu),$$
and
$$T^k_{\mu,\ve} f(x) := T^k_\ve(f\,\mu)(x).$$

Concerning the limit properties of the operators $T^k_{\mu,\ve}$ one can ask if the limit, the so called principal value of $T$,
\begin{equation*}
\lim_{\varepsilon \rightarrow 0}T^k_{\mu,\ve}(f)(x),
\end{equation*}
exists $\mu$ almost everywhere. When $\mu$ is the
Lebesgue measure in $\Rd$, and $k$ is a standard
Calder\'{o}n-Zygmund kernel, due to cancellations and the denseness
of smooth functions in $L^1$, the principal values exist almost
everywhere for $L^1$-functions. For more general measures, the
question is more complicated. Let $n$ be an integer, $0 <n <d$, and
consider the coordinate Riesz kernels
\begin{equation*}
R_i^{n} (x)=\frac {x_i}{|x|^{n+1}}\text{ for } i=1,\dots,d.
\end{equation*}
Tolsa proved in \cite{T2} that if $E \subset \R^d$ has finite $n$-dimensional Hausdorff measure $\mathcal{H}^n$ the principal values
\begin{equation*}
\lim_{\varepsilon \rightarrow 0}\int_{E\setminus
B(x,\varepsilon )}\frac {x_i-y_i}{ |x-y|^{m+1}}d \mathcal{H}^n (y)
\end{equation*}
exist $\mathcal{H}^n$ almost everywhere in $E$ if and only if the set $E$ is
$n$-rectifiable i.e. if there exist $n$-dimensional Lipschitz
surfaces $M_{i}$, $i\in \N$, such that
\begin{equation*}
\mathcal{H}^{n}(E \setminus \cup_{i=1}^\infty M_{i})=0. 
\end{equation*}
Mattila and Preiss had obtained the same result earlier, in \cite{MPr} under some stronger assumptions for the set $E$. It becomes obvious that the existence of principal values is deeply related to the geometry of the set $E$.

Assuming $L^2(\mu)$-boundedness for the operators $T^k_\mu$ one could have expected that more could be deduced about the structure of $\mu$ and the existence of principal values, but this is a hard and, in a large extent, open problem. Dating from 1991 the David-Semmes conjecture, see \cite{DS}, asks if the $L^2(\mu)$-boundedness of the operators associated with the $n$-dimensional Riesz kernels suffices to imply $n$-uniform rectifiabilty, which can be thought as a quantitative version of rectifiability. In the very recent deep work \cite{NToV}, Nazarov, Tolsa and Volberg resolved the conjecture in the codimension 1 case, that is for  $n=d-1$. Mattila, Melnikov and Verdera in \cite{MMV}, using a special symmetrization property of the Cauchy kernel, had earlier proved the conjecture in the case of $1$-dimensional Riesz kernels. For all other dimensions and for other kernels few things are known. In fact, there are several examples of kernels whose boundedness does not imply rectifiability, see \cite{C}, \cite{D3} and \cite{hu}. For some recent positive results involving other kernels see \cite{CMPT}.

Let $\mu$ be a finite Radon measure and let $k$ be an antisymmetric kernel in a complete metric space $(X,d)$ where the Vitali covering theorem holds for $\mu$ and the family of closed balls defined by $d$. Mattila and Verdera in \cite{MV} showed that in this case the $L^2(\mu)$-boundedness of the
operators $T^k_{\mu,\ve}$ forces them to converge weakly
in $L^2(\mu)$. This means that there exists a bounded linear
operator $T^k_{\mu}:L^2(\mu)\rightarrow L^2(\mu)$ such that for all
$f,g \in L^2(\mu)$,
\begin{equation*}
\lim _{\varepsilon \rightarrow 0} \int T^k_{\mu,\ve}(f)(x)g(x)d\mu (x)=\int T^k_{\mu}(f)(x)g(x)d\mu (x).
\end{equation*}
Furthermore notions of weak convergence have been recently used by Nazarov, Tolsa and Volberg in \cite{NToV}.

Motivated by these developments it is natural to ask if limits of this type
might exist if we remove the very strong $L^2$-boundedness
assumption. We prove that the operators $T^k_{\mu,\ve}$ converge
weakly in dense subspaces of $L^2(\mu)$ under minimal assumptions for the measures and the kernels in general metric spaces. 
Denote by $\mathcal{X}_{B}$ the space of all finite linear combinations of
characteristic functions of balls in $X$,
$$\mathcal{X}_{B}=\left\{\sum_{i=1}^n a_i\chi_{B(z_i,r_i)}: \, n \in \N, \, a_i\in \R, \,z_i \in X, \, r_i>0\right\}.$$
Whenever Vitali's covering theorem holds for the closed balls in $(X,d)$ the space $\mathcal{X}_{B}$ is dense in $L^2(\mu)$. When $X=\Rd$  Vitali's covering theorem holds for any Radon measure $\mu$ and the closed balls defined by various metrics (including the standard $d_p$ metrics for $1\leq p\leq \infty$) as a consequence of Besicovitch's covering theorem, see \cite[Theorem 2.8]{M}. 
Furthermore Vitali's covering theorem holds for any metric space $(X,d)$ whenever $\mu$ is doubling, that is when there exists some constant $C$ such that for all balls $B$, $\mu(2 B) \leq C \mu (B)$, see \cite[Section 2.8]{F}.
\begin{theorem}
\label{main}
Let $\mu$ be a finite Radon measure with $s$-growth and $k$ an antisymmetric $s$-dimensional kernel on a metric space $(X,d)$. If the Vitali Covering theorem holds for the closed balls in $(X,d)$ then there exists subsets $\mathcal{X}_{B}' \subset \mathcal{X}_{B}$ which are dense in $L^2(\mu)$ and the weak limits
$$\lim_{\ve \ra 0} \int T^k_{\mu,\ve} f(x) \,g(x) d \mu(x)$$
exist for all $f,g \in  \mathcal{X}_{B}'$.
\end{theorem}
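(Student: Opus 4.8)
The plan is to reduce the statement, via bilinearity, to the convergence of the bilinear form $B_\ve(f,g):=\int T^k_{\mu,\ve}f\cdot g\,d\mu$ on pairs of characteristic functions of closed balls, and to select, for each centre, a full-measure set of radii along which that convergence holds. For $z\in X$ let $\nu_z$ be the push-forward of $\mu$ under the map $x\mapsto d(x,z)$; it is a finite Borel measure on $[0,\infty)$ with total mass $\mu(X)$. Call $r>0$ \emph{admissible for $z$} if $\int_{(r,\infty)}\log^+\!\frac{1}{t-r}\,d\nu_z(t)<\infty$, where $\log^+u=\max(\log u,0)$. Because $\int_0^t\log^+(1/u)\,du\le 1$ for every $t>0$, Tonelli's theorem gives $\int_0^\infty\!\big(\int_{(r,\infty)}\log^+\!\frac{1}{t-r}\,d\nu_z(t)\big)\,dr\le\mu(X)<\infty$, so for each $z$ the admissible radii form a subset of $(0,\infty)$ of full Lebesgue measure, in particular a dense one. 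Define $\mathcal{X}_B'$ to be the set of finite linear combinations in $\mathcal{X}_B$ of those $\chi_{B(z,r)}$ (closed balls; the open-ball case is analogous) for which $r$ is admissible for $z$.

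A preliminary estimate uses only $s$-growth and finiteness of $\mu$: decomposing $\{x:d(x,y)>\delta\}$ into dyadic annuli and bounding each by $s$-growth for small radii and by $\mu(X)$ for large radii yields a constant $C=C(c_\mu,s,\mu(X))$ with $\int_{\{d(x,y)>\delta\}}d(x,y)^{-s}\,d\mu(x)\le C\big(1+\log^+(1/\delta)\big)$ for all $y\in X$ and $\delta>0$. The core of the proof is then the claim that if $A=B(z_A,r_A)$ and $B=B(z_B,r_B)$ with $r_A$ admissible for $z_A$ and $r_B$ admissible for $z_B$, then $\lim_{\ve\to0}B_\ve(\chi_A,\chi_B)$ exists. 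Since $|k|\le c\,\ve^{-s}$ on $\{d(x,y)>\ve\}$ and $\mu$ is finite, Fubini gives $B_\ve(\chi_A,\chi_B)=\iint_{\{x\in B,\ y\in A,\ d(x,y)>\ve\}}k(x,y)\,d\mu(y)\,d\mu(x)$. Put $S=A\cap B$ and split the domain into $(S\times S)$, $(S\times(A\setminus B))$, $((B\setminus A)\times S)$ and $((B\setminus A)\times(A\setminus B))$, each intersected with $\{d>\ve\}$. On $S\times S$ the domain is symmetric under $(x,y)\mapsto(y,x)$ while $k$ is antisymmetric and bounded there, so this piece vanishes for every $\ve$; this is the only place antisymmetry is used. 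On each of the other three pieces at least one of $x,y$ lies outside one of the two balls, so the triangle inequality forces $d(x,y)>d(y,z_B)-r_B$ or $d(x,y)>d(x,z_A)-r_A$ with the right-hand side strictly positive; integrating $d(x,y)^{-s}$ in the remaining variable via the preliminary estimate produces a term $\log^+\!\frac{1}{d(y,z_B)-r_B}$ (or $\log^+\!\frac{1}{d(x,z_A)-r_A}$), whose integral against $\nu_{z_B}$ (resp. $\nu_{z_A}$) over the bounded range that $d(\cdot,z_B)$ (resp. $d(\cdot,z_A)$) takes on a ball is finite precisely because $r_B$ (resp. $r_A$) is admissible. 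Hence $d(x,y)^{-s}$ is $\mu\times\mu$-integrable on pieces two, three and four, and dominated convergence gives the limit of each of them as $\ve\to0$; adding up proves the claim.

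It remains to note that $\mathcal{X}_B'$ is dense in $L^2(\mu)$. Since $\mathcal{X}_B$ is dense by hypothesis, it suffices to approximate each $\chi_{B(z,r)}$: choosing admissible radii $r_n\downarrow r$ (possible, as admissible radii are dense) we have $\bigcap_n B(z,r_n)=B(z,r)$ for closed balls, so $\chi_{B(z,r_n)}\downarrow\chi_{B(z,r)}$ pointwise and, $\mu$ being finite, in $L^2(\mu)$. Finally, for $f=\sum_i a_i\chi_{A_i}$ and $g=\sum_j b_j\chi_{B_j}$ in $\mathcal{X}_B'$, bilinearity gives $\int T^k_{\mu,\ve}f\cdot g\,d\mu=\sum_{i,j}a_ib_j\,B_\ve(\chi_{A_i},\chi_{B_j})$, and each summand converges as $\ve\to0$ by the claim, since all balls involved have admissible radii; therefore $\lim_{\ve\to0}\int T^k_{\mu,\ve}f\cdot g\,d\mu$ exists, which is the assertion of the theorem. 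The main obstacle is the core claim, and within it the point that $\iint d(x,y)^{-s}\,d\mu(x)\,d\mu(y)$ diverges logarithmically near the diagonal: the diagonal piece therefore cannot be controlled by absolute convergence and must be annihilated by antisymmetry, while the precise shape of the admissibility condition is dictated by exactly what is needed to render the three off-diagonal pieces absolutely convergent.
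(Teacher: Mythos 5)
Your proof is correct, and while it shares the same overall skeleton as the paper (reduce to pairs of characteristic functions of balls, kill the diagonal piece $S\times S$ by antisymmetry, reduce the remaining three off-diagonal pieces to absolute integrability of $|k|$ off the diagonal, which costs a logarithmic singularity near the sphere that must be controlled by picking good radii), it replaces the paper's central technical device with a genuinely different and cleaner one. The paper proves a separate ``exponential growth'' lemma: by a dyadic-type subdivision of the interval into $\lambda^{2n}$ pieces at stage $n$ and discarding those with too much $\nu$-mass, it manufactures, for each $\lambda>2$, a subset $I'(\lambda)$ of Lebesgue measure at least $|I|(1-3\sum_{k\ge1}\lambda^{-k})$ on which $\nu\bigl([t-\lambda^{-3n},t+\lambda^{-3n}]\bigr)<\lambda^{-n}$ for all $n$; the good radii $G_z$ are then defined as $\bigcup_{\lambda>2}I'_z(\lambda)$, and the finiteness of $\int_{B}\int_{B^c}|k|\,d\mu\,d\mu$ is extracted from this decay by summing a geometric series against $|\log|$. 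You instead define admissibility by the integral condition $\int_{(r,\infty)}\log^+\frac{1}{t-r}\,d\nu_z(t)<\infty$ --- precisely the condition the argument ends up needing --- and verify it holds for Lebesgue-a.e.\ $r$ by a one-line Tonelli computation using $\int_0^t\log^+(1/u)\,du\le 1$. This is shorter, self-contained, yields full Lebesgue measure directly rather than as a limit over $\lambda$, and sidesteps the paper's auxiliary lemma entirely. A second, smaller improvement: the paper's estimate passes through $d(x,\partial B)$, a quantity that is a bit awkward in a general metric space where the boundary of a closed ball need not ``separate'' inside from outside; your bound uses instead the clean triangle-inequality lower bound $d(x,y)\ge d(y,z_B)-r_B$ for $x\in B$, $y\notin B$, which is unambiguous and robust. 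The preliminary dyadic-annulus estimate $\int_{\{d(x,y)>\delta\}}d(x,y)^{-s}\,d\mu\lesssim 1+\log^+(1/\delta)$ and the density argument (approximate any closed ball by balls of admissible radii $r_n\downarrow r$) are essentially the same in both proofs.
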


Until now Theorem \ref{main} was only known for measures with $(d-1)$-growth in $\Rd$ under some smoothness assumptions for the kernels, see
\cite{CM}. We thus extend the result from \cite{CM} to measures with $s$-growth for arbitrary $s$ in metric spaces where Vitali's covering theorem holds for the family of closed balls without requiring any smoothness for the kernels. Our proof follows a completely different strategy using  an ``exponential growth" lemma for probability measures on intervals and is self contained (unlike the proof from \cite{CM} which depends on several $L^2(\nu)$ to $L^2(\mu)$ boundedness results for separated measures $\nu$ and $\mu$).

Recall that if $k$ is the $(d-1)$-dimensional Riesz kernel in $\Rd$ and $\mu$ has $(d-1)$-growth and is $(d-1)$ purely unrectifiable, that is  $\mu(E)=0$ for all $(d-1)$-rectifiable sets $E$, the principal values  diverge $\mu$ almost everywhere and the weak convergence in $L^2(\mu)$ fails. On the other hand it is of interest that weak convergence in the sense of Theorem \ref{main} holds as it holds for any $s$-dimensional antisymmetric kernel and any finite measure with $s$-growth.

\section{Proof of Theorem \ref{main}}
We first prove the following lemma about exponential growth of
probability measures on compact intervals. It is motivated by a
similar result proved in \cite{uzd}. Here ${\rm Leb}$ stands for
the Lebesgue measure on the real line and $|I|$ denotes the length of an interval $I \subset \R$. 

\begin{lemma}
\label{exp} For every integer $\lambda >2$ the following holds. Let
$\nu$ be a probability Borel measure on a compact interval $\Delta
\subset \R$. Then for every interval $I \subset \Delta$ there exists a
subset $I'(\lambda) \subset I$ such that $Leb(I'(\lambda) ) > |I|(1
-3( \lambda^{-1} + \lambda^{-2} + \dots ))$ and for every $t \in
I'(\lambda)$, 
$$\nu ([t -\lambda^{−3n} , t + \lambda^{−3n} ]) < \lambda^{-3n}$$
for all integers $n \geq 1$.
\end{lemma}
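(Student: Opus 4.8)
The plan is to produce $I'(\lambda)$ as the complement inside $I$ of a countable family of ``bad sets'', one for each of the scales $r_n:=\lambda^{-3n}$, $n\ge 1$, to bound the Lebesgue measure of each bad set by a single elementary averaging estimate, and then to sum over $n$. The exponential spacing of the scales is what makes this last sum a convergent geometric series, whose total will be exactly the permitted loss $3(\lambda^{-1}+\lambda^{-2}+\cdots)\,|I|$.

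The only genuine estimate needed is the following. Fix $r>0$ and set $\phi_r(t):=\nu([t-r,t+r])$ for $t\in I$. Since $\nu$ is a probability measure, Tonelli's theorem gives
\[
\int_I \phi_r(t)\,dt=\int \mathrm{Leb}\bigl(\{t\in I:\ |t-y|\le r\}\bigr)\,d\nu(y)\ \le\ 2r ,
\]
because for every $y$ the set in the last integrand is an interval of length at most $2r$. Markov's inequality then yields, for any level $a>0$,
\[
\mathrm{Leb}\bigl(\{t\in I:\ \phi_r(t)\ge a\}\bigr)\ \le\ \frac{2r}{a}\, .
\]

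With this in hand I would set $B_n:=\{t\in I:\ \phi_{r_n}(t)\ge a_n\}$, where $r_n=\lambda^{-3n}$ and $a_n$ is the level appearing in the conclusion, and define $I'(\lambda):=I\setminus\bigcup_{n\ge1}B_n$. By construction every $t\in I'(\lambda)$ satisfies $\nu([t-r_n,t+r_n])<a_n$ for all $n\ge1$, which is the asserted inequality. For the measure bound, the estimate above gives $\mathrm{Leb}(B_n)\le 2r_n/a_n$, and the radii $r_n$ and levels $a_n$ are related (using $\lambda>2$, and absorbing the factor $2$ from the symmetric interval into the constant $3$) so that the sequence $(2r_n/a_n)_{n\ge1}$ is dominated by the geometric series $3(\lambda^{-1}+\lambda^{-2}+\cdots)\,|I|$; hence
\[
\mathrm{Leb}(I'(\lambda))\ \ge\ |I|-\sum_{n\ge1}\mathrm{Leb}(B_n)\ >\ |I|\Bigl(1-3(\lambda^{-1}+\lambda^{-2}+\cdots)\Bigr).
\]
No covering lemma, maximal operator, or regularity of $\nu$ enters here — only Tonelli and Markov. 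The step I expect to require the most care is the bookkeeping in the previous sentence: one must check that, with the scales $\lambda^{-3n}$ and the thresholds from the statement, the per--scale losses $2r_n/a_n$ really do telescope into the stated geometric series, so that the infinite union $\bigcup_n B_n$ stays small. That arithmetic, rather than any analytic difficulty, is the heart of the matter, and it is precisely where the exponential spacing of the scales $\lambda^{-3n}$ is used.
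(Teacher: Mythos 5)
Your strategy — Tonelli to bound $\int_I \nu([t-r,t+r])\,dt$ by $2r$, Markov at each scale, then a union bound over $n$ — is a genuinely different and in principle cleaner route than the paper's, which constructs $I'(\lambda)$ by recursively partitioning $I$ into $\lambda^2$ equal pieces at each generation and discarding those of $\nu$-mass $\geq \lambda^{-n}$. But the arithmetic that you explicitly defer to the end is exactly where the argument breaks as you have set it up. Taking $r_n=\lambda^{-3n}$ and, ``per the conclusion,'' $a_n=\lambda^{-3n}$, Markov gives $\mathrm{Leb}(B_n)\le 2r_n/a_n=2$, a constant independent of $n$. The series $\sum_n 2$ diverges, and the per-scale losses do not telescope into $3(\lambda^{-1}+\lambda^{-2}+\cdots)|I|$; nothing prevents $\bigcup_n B_n$ from covering $I$. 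In fact the statement as literally written is false: take $\nu$ to be Lebesgue measure on $\Delta=I=[0,1]$; then $\nu([t-\lambda^{-3n},t+\lambda^{-3n}])=2\lambda^{-3n}>\lambda^{-3n}$ for every $t\in[\lambda^{-3n},1-\lambda^{-3n}]$, so essentially all of $I$ is bad at every scale.

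What the paper's own proof actually establishes (and what is used in the application to $\mu_z$) is the corrected estimate $\nu\big([t-|I|\lambda^{-3n},\,t+|I|\lambda^{-3n}]\big)<\lambda^{-n}$: the exponent on the radius is three times the exponent on the mass bound, and the radius carries the factor $|I|$. With $r_n=|I|\lambda^{-3n}$ and $a_n=\lambda^{-n}$, your Markov bound gives $\mathrm{Leb}(B_n)\le 2|I|\lambda^{-2n}$, which sums to $\dfrac{2|I|}{\lambda^2-1}\le\dfrac{3|I|}{\lambda-1}$ for every $\lambda>2$, and your approach then goes through and even yields a slightly better constant than the paper's combinatorial construction. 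So the framework is sound and attractive, but the step you waved at — ``one must check that the per-scale losses $2r_n/a_n$ telescope'' — is the whole point: the ratio $r_n/a_n=\lambda^{-2n}$, not $1$, is what makes the union of bad sets small, and reading the threshold off the statement as written gives ratio $1$ and a divergent sum. You need to diagnose the misprint in the lemma and carry the $|I|$-scaling and the $\lambda^{-n}$ threshold before the Markov step can close.
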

\begin{proof}
Let us partition the interval $I$ into ${\lambda^2}$ subintervals $J$
of length $|I|\lambda^{-2}$. 
Let $B_1$ be the family of all intervals $J$ from this partition for which
$\nu(J)<\lambda^{-1}$. Obviously, there are at most $\lambda$
intervals in $B_1^c$. Thus 
$$
\#B_1>\lambda^2-\lambda=\lambda^2\lt(1-\frac{\lambda}{\lambda^2}\rt)
$$
and
$$
{\rm Leb}\lt(\bigcup\{J:J\in B_1\}\rt)
\ge |I|\lt(1-{\lambda\over{\lambda^2}}\rt)
=|I|\lt(1-{1\over\lambda}\rt).
$$
Next, each interval in $B_1$ is divided into $\lambda^2$ subintervals
with disjoint interiors and of length $|I|\lambda^{-4}$, and
we remove those subintervals for which $\nu(J)\ge\lambda^{-2}$.
Denoting by $B_2$ the family of remaining intervals, we see that 
$$
\#B_2\ge (\lambda^2)^2\lt(1-{\lambda\over{\lambda^2}}\rt)-\lambda^2
=(\lambda^2)^2\lt(1-{1\over\lambda}-{1\over\lambda^2}\rt)
$$
and
$$
{\rm Leb}\lt(\bigcup\{J:J\in B_2\}\rt)
\ge |I|\lt(1-{1\over\lambda}-{1\over{\lambda^2}}\rt).
$$
Proceeding inductively, we partition the interval $I$ into disjoint intervals
of length $|I|\lambda^{-2n}$. Next,  we define
 in the same way the family   $B_n$. It is formed by the intervals
 $J$  of this partition of $n$'th generation, which are contained in
 some interval of the family $B_{n-1}$ and  for which
 $\nu(J)<\lambda^{-n}$. Then 
 $$
{\rm Leb}\lt(\bigcup\{J:J\in B_n\}\rt)
\ge\lt(1-{1\over\lambda}-{1\over{\lambda^2}}-\dots
 -{1\over{\lambda^n}}\rt)|I|.
$$ 
For any $t\in I$ let $J_n=J_n(t)$ be the interval of the
$n$'th  partition such that $t\in J_n$. Thus, for every 
$t\in\bigcap_{n=1}^\infty\bigcup_{J\in B_n}J$, we have that
$J_n(t)\in B_n$. Consequently, for all
$t\in\bigcap_{n=1}^\infty\bigcup_{J\in B_n}J$, it holds that
$\nu(J_n(t))<\lambda^{-n}$ for all $n\ge 1$.  Let now
$$
C_n=\{t\in I: [t-
|I|\lambda^{-3n},t+|I|\lambda^{-3n}]\subset J_n(t)\}.
$$
It is easy to see that ${\rm Leb}(C_n^c)<2|I|\lambda^{-n}$,
and, therefore, 
$$
{\rm Leb}\lt( \bigcap_{n=1}^\infty C_n\rt)
>|I|\lt(1-2\lt({1\over\lambda}+{1\over\lambda^2}+\dots\rt)\rt).
$$
Finally, setting
$$
I':= \lt(\bigcap_{n=1}^\infty C_n\rt)\cap\lt(
\bigcap_{i=1}^\infty\bigcup_{J\in B_i}J \rt)
$$ 
completes the proof. 
\end{proof}

\begin{proof} [Proof of Theorem \ref{main}] 
We can assume that $\mu(X)\leq 1$. We define finite Borel measures on the unit interval for all $z \in \supp \mu$ by
$$\mu_z(F)=\mu \{x \in X: d(x,z) \in F\}, \, F \subset [0,1].$$
Let $A_z= \cup_{\lambda >2} I'_z(\lambda)$ where $I'_z(\lambda)$ are the sets we obtain after we apply Lemma \ref{exp} to the measures $\mu_z$.
Then Lemma \ref{exp} implies that $\mu_z(A_z)=\mu_z([0,1])$. Let $G_z=\{r \in (0,1]:\ r \in A_z\}$ and 
$$\mathcal{X}_{B}'=\left\{\sum_{i=1}^n a_i\chi_{B(z_i,r_i)}: \, n \in \N, \, a_i\in \R, \, z_i \in \supp \mu, \, r_i \in G_{z_i}\right\}.$$
Then $\mathcal{X}_{B}'$ is dense in $L^2(\mu)$.

Let $f,g \in \mathcal{X}_{B}'$ such that
\begin{equation*}
f=\sum_i^n a_{i}\chi _{B_i}\text{ and }g=\sum_j^m b_{j}\chi _{S_{j}},
\end{equation*}
where $a_{i},b_{j}\in \R$ and $B_{i},S_{j}$ are closed balls. Then for $0<\delta<\varepsilon$,
\begin{equation*}
\int T_{\mu,\ve }^{k }f(x)g(x)d\mu(x)-\int T_{\mu,\delta }^{k }f(x)g(x)d\mu(x)=
\sum_{j=1}^m\sum_{i=1}^n  a_ib_j \underset{ \delta <d(x,y)<\varepsilon }{\int_{S_j}\int_{B_i}} k(x,y) d \mu (y) d \mu (x).  
\end{equation*}
Furthermore,
\begin{equation*}
\begin{split}
&\left| \underset{\delta <d(x,y) <\varepsilon
}{\int_{S_{j }}\int_{B_{i}}}k(x,y)d\mu (y)d\mu (x)\right| \\
& \quad \leq \left| \underset{\delta <d(x,y)<\varepsilon }{\int_{B_{i}\cap
S_{j}}\int_{B_{i}\cap
S_{j}}}k(x,y)d\mu (y)d\mu (x)\right| +\left| \underset{\delta <d(x,y)
 <\varepsilon }{\int_{S_{j}\setminus B_{i}}\int_{B_{i}\cap
S_{j}}}k(x,y)d\mu (y)d\mu (x)\right|\\ 
& \quad \quad \quad \quad + \left| \underset{\delta <d(x,y) <\varepsilon }{\int_{S_{j}\setminus
B_{i}}\int_{B_{i}\setminus S_{j}}}k(x,y)d\mu (y)d\mu (x) \right| +\left|
\underset{\delta <d(x,y) <\varepsilon }{\int_{S_{j}\cap
B_{i}}\int_{B_{i}\setminus S_{j}}}k(x,y)d\mu (y)d\mu (x) \right|\\
&\quad \leq \underset{\delta <d(x,y) <\varepsilon }{
\int_{B_{i}}\int_{B_{i}^{c}}}\left| k(x,y)\right| d\mu (y)d\mu
(x)+2\underset{\delta <d(x,y) <\varepsilon }{\int_{S_{j}}\int_{S_{j}^{c}}}\left| k(x,y)\right| d\mu (y)d\mu (x).
\end{split}
\end{equation*}
The last inequality follows because by antisymmetry and Fubini's theorem 
$$\underset{\delta <d(x,y)<\varepsilon }{\int_{B_{i}\cap
S_{j}}\int_{B_{i}\cap
S_{j}}}k(x,y)d\mu (y)d\mu (x)=0.$$
Therefore it is enough to show that for any ``good" ball $B=B(z,r)$ with $z \in \supp \mu$ and $r \in G_{z}$
$$\lim_{\substack{0<\delta<\ve \\ \ve \ra 0}} \underset{\delta <d(x,y) <\varepsilon }{\int_B \int_{B^c}}|k(x,y)| d\mu (y)d\mu (x)=0,$$
which will follow by the monotone convergence theorem if we show that 
\begin{equation}
\label{finite}
\int_B \int_{B^c}|k(x,y)| d\mu (y)d\mu (x)<\infty.
\end{equation}

Since $B=B(z,r)$ and $r \in G_z$ Lemma \ref{exp} implies that $\mu(\partial B)=0$ hence it is enough to show that
$$\int_{B^o} \int_{B^c}|k(x,y)| d\mu (y)d\mu (x)<\infty$$
where $B^o$ stands for the interior of $B$. For any $x \in B^o$ let $n(x)>0$ such that
$$2^{n(x)} d(x, \partial B)=3$$
and $N(x)=\text{integer part of }n(x)+1.$ Therefore, since $\diam (B) \leq 1$, $$B(x,2) \stm B \subset \cup_{i=1}^{N(x)} B(x,2^i d(x, \partial B))\setminus B(x,2^{i-1} d(x, \partial B)).$$ Hence for all $x \in B^o$
\begin{equation*}
\begin{split}
\int_{B(x,2) \stm B}|k(x,y)| d\mu (y) &\leq \int_{B(x,2) \stm B} d(x,y)^{-s} d\mu (y) \\
&= \sum_{i=1}^{N(x)} \int_{B(x,2^i d(x, \partial B))\setminus B(x,2^{i-1} d(x, \partial B))} d(x,y)^{-s}d \mu (y)\\
&\leq \sum_{i=1}^{N(x)} \mu(B(x,2^i d(x, \partial B)) (2^{i-1} d(x, \partial B))^{-s} d \mu (y) \\
& \lesssim N(x) \lesssim |\log d(x,\partial B)|,
\end{split}
\end{equation*}
and 
\begin{equation*}
\begin{split}
\int_{B^c}|k(x,y)| d\mu (y) &\lesssim \int_{B(x,2)^c}d(x,y)^{-s} d\mu (y)+ |\log d(x,\partial B)|\\
&\lesssim 1 + |\log d(x,\partial B)|.
\end{split}
\end{equation*}
Since $r \in G_z$ there exists some $\lambda \in \N$ such that $r \in I'_z(\lambda)$. We write,
\begin{equation*}
\begin{split}
\int_{B(z,r)^o}|\log d(x, \partial B)| d \mu (x)&=\int_{B(z,r-\lambda^{-3})^o}|\log d(x, \partial B)| d \mu (x)\\
& \quad \quad \quad+\sum_{n=1}^\infty \int_{\{x: r-\lambda^{-3n}\leq d(z,x)<r-\lambda^{-3(n+1)}\}}|\log d(x, \partial B)| d \mu (x)
\end{split}
\end{equation*}
Notice that by Lemma \ref{exp}
\begin{equation*}
\begin{split}\mu(\{x: r-\lambda^{-3n}&\leq d(z,x)<r-\lambda^{-3(n+1)}\})=\mu_z([r-\lambda^{-3n},r-\lambda^{-3(n+1)}))\\
&\quad  \quad \leq \mu_z([r-\lambda^{-3n},r+\lambda^{-3n}))\leq \lambda^{-n}.
\end{split}
\end{equation*}
Therefore,
\begin{equation*}
\begin{split}
\int_{B(z,r)^o}|\log d(x, \partial B)| d \mu (x) \lesssim 3\log(\lambda)(r-\lambda^{-3})^s+\sum_{i=1}^n \lambda^{-n} |\log (\lambda^{-3(n+1)})|<\infty
\end{split}
\end{equation*}
and this completes the proof of Theorem \ref{main}.
\end{proof}

\enlargethispage{2cm}

\end{document}